\setlist[enumerate]{leftmargin=.5in}
\setlist[itemize]{leftmargin=.5in}
\DeclareMathOperator{\spann}{span}
\renewcommand{\a}{\alpha}
\newcommand{\f}{\mathbb{F}}
\newcommand{\K}{\mathbb{K}}
\newcommand{\fqm}{{\mathbb{F}_{q^m}}}
\newcommand{\fq}{{\mathbb{F}_q}}
\newcommand{\fqmst}{\mathbb{F}_{q^m}^*}
\newcolumntype{C}{>{\centering\arraybackslash}X} 
\newtheorem{theorem}{Theorem}
\newtheorem{problem}{Problem}
\newtheorem{proposition}{Proposition}
\theoremstyle{definition}
\newtheorem{definition}{Definition}
\theoremstyle{remark}
\newtheorem{remark}{Remark}
\def\thm@space@setup{%
  \thm@preskip=\parskip \thm@postskip=0pt
}
\begin{document}
\author{Simran Tinani}
\author{Joachim Rosenthal}

\affil{University of Zürich, Winterthurerstrasse, 8057 Zurich, \\
 WWW home page: 
\texttt{https://www.math.uzh.ch/aa/}}

\title{Existence and Cardinality of $k$-Normal Elements in Finite Fields}

\maketitle           

\begin{abstract}
Normal bases in finite fields constitute a vast topic of large theoretical and practical interest. Recently, $k$-normal elements were introduced as a natural extension of normal elements. The existence and the number of $k$-normal elements in a fixed extension of a finite field are both open problems in full generality, and comprise a promising research avenue. In this paper, we first formulate a general lower bound for the number of $k$-normal elements, assuming that they exist. We further derive a new existence condition for $k$-normal elements using the general factorization of the polynomial $x^m-1$ into cyclotomic polynomials. Finally, we provide an existence condition for normal elements in $\fqm$ with a non-maximal but high multiplicative order in the group of units of the finite field. \end{abstract}
\section{Introduction}

Let $q$ denote a power of a prime $p$, and $\fq$ denote the finite field of order $q$. If $\f$ is an extension field of the field $\K$, we denote by $\mathrm{Gal}(\f/\K)$ the Galois group of the extension field $\f$ over $\K$. We are interested in studying elements in a finite
extension $\fqm$ of degree $m$ over $\fq$. An
element $\a \in \fqm $ is called a normal element over $\fq$ if all its Galois conjugates, i.e. the $m$ elements $\{\alpha, \alpha^q, \ldots, \alpha^{q^{m-1}}\} $, form a basis of $\fqm$ as a vector space over $\fq$. A basis of this form is called a normal basis.

We let $\phi$ denote the usual Euler-phi function for integers. Let $f \in \fq[x]$ be a polynomial with positive degree $m$. Then $\Phi_q(f)$ is defined to be the order of the ring $\left(\frac{\fq[x]}{\langle f \rangle}\right)^{\times},$ where $\langle f\rangle$ denotes the ideal generated by $f$ in $\fq[x]$. In other words, $\Phi_q(f)$ is the number of polynomials co-prime to $f$ and with degree less than $m$. It is well known that normal elements exist in every finite extension $\fqm$ of $\fq$ and that there are precisely $\Phi_q(x^m-1)$ normal elements, and thus $\frac{\Phi_q(x^m-1)}{m}$ normal bases in $\fqm$ \cite[Theorem 2.35, Theorem 3.73]{lidl1997finite}, \cite{hensel1888ueber}, \cite{ore1934contributions}). 

Normal elements are a topic of major significance and interest because they offer an avenue for efficient arithmetic in a finite field $\fq$: for instance, raising an element to the power $q$ is simply a cyclic shift in normal base representation. Normal bases and related concepts such as optimal normal bases and self-dual normal bases find several applications, both theoretical and practical. We refer the interested reader to \cite{ash1989low}, \cite{gao1993normal}, and  \cite{blake1993applications} for more on this topic.

In \cite{huczynska2013existence}, Huczynska et al. introduced the concept of $k$-normal elements as a natural generalization of normal elements. One of the many equivalent ways to define a $k$-normal element $\alpha \in \fqm$ is as an element whose conjugates $\{\a, \a^q, \a^{q^2}, \ldots \a^{q^{m-1}}\}$ span a vector space of dimension $m-k$ over $\fq$. It is then of natural interest to examine the existence and the number of $k$-normal elements. These problems have been shown to be closely tied to the factorization of the polynomial $x^m-1$ \cite{huczynska2013existence}. In this paper, we denote by $n_k$ the number of $k$-normal elements in an extension $\fqm$ of $\fq$. There are numerous known results on bounds on the number $n_0$, several of which build on the lower bounds proved in \cite{frandsen2000density} using properties of the function $\Phi_q$ (see also the improvements on these results in \cite{gao1997density}). For arbitrary $k$, $0< k < m-1$, neither a general rule for the existence of $k$-normal elements nor a general formula for their number $n_k$, when they exist, is known.

Huczynska et al. \cite{huczynska2013existence} have used the approach of Frandsen \cite{frandsen2000density} to give a lower bound on $n_k$ which holds asymptotically, as well as an upper bound which holds in general. However, both their upper and lower bounds depend directly on the number of divisors of $x^m-1$ with degree $m-k$, and are thus difficult to calculate. Moreover, when $x^m-1$ has no divisor of degree $m-k$, the bounds equal zero, which means that the statement about lower bounds does not yield any existence result.
 
In a recent paper, Saygı et al. \cite{saygi2019number} give formulas (in terms of $q$ and $m$) for $n_k$ for cases where $m$ is a power of a prime or of the form $2^v\cdot r$ where $r\neq2$ is a prime and $v \geq 1$, using known results on the explicit factorization of cyclotomic polynomials. In particular, their formulae guarantee existence for certain cases. A recent result by Reis \cite[Theorem 5.5]{reis2018existence} provides a sufficient condition on $m$ for which $k$-normal elements exist for every $0\leq k \leq m$. Some relevant interesting results on the construction of $k$-normal elements, as well as alternate proofs of existing results, are found in \cite{SOZAYACHAN201894}.

In 1987, Lenstra and Schoof \cite{lenstra1987primitive} proved (also see partial proofs by Carlitz \cite{carlitz1952primitive} and Davenport \cite{davenport1968bases}) the Primitive Normal Basis theorem, which states the existence of an element that is simultaneously normal and primitive (i.e. has multiplicative order $q^m-1$ in $\fqmst$). By extension, elements that have high multiplicative orders and also span large subspaces along with their conjugates are of interest. In particular, problems along this line have found mention in \cite{huczynska2013existence}, \cite{kapetanakis2014normal}, \cite{kapetanakisvariations} and \cite{mullen2016some}. The question of the existence of elements in $\fqm$ that are both $1$-normal over $\fq$ and primitive has been answered in entirety in \cite{reis2018existence}, after a partial proof and formulation of the problem in \cite{huczynska2013existence}.

In this paper, we first present a result that guarantees a general lower bound on $n_k$ (for arbitrary $0\leq k \leq m-1$), provided that $k$-normal elements exist. This proves a link between $n_0$ and $n_k$ for $k>0$. Since this result does not make any additional assumption about $k$, $q$ or $m$, it is not derivable from any of the known formulas for $n_k$. 

We further present an existence condition for $k$-normal elements (over $\fq$) in $\fqm$ based on inequalities involving $m$ and $k$. It turns out that under certain constraints on $m$ (loosely put, $m$ must have a sufficiently large common divisor with $q^m-1$), $k$-normal elements exist for $k$ above a minimum lower bound. This result is independent of the factorization of $x^m-1$. Moreover, the conditions on $m$ and $q$ required are weaker than the special forms required in \cite{saygi2019number}, and also cannot be derived from the conditions in \cite[Theorem 5.5]{reis2016existence}. In fact, when $p \nmid m$, our theorem is a generalization of this result. 

Our final contribution is an existence condition for normal elements of multiplicative order $\frac{q^m-1}{q-1}$ in $\fqm$ when $m$ and $q-1$ are co-prime. Using the terminology of \cite{mullen2016some}, this is the same as talking about $0$-normal, $(q-1)$-primitive elements. With this result, we answer a special case of Problem 6.4 posed in \cite{huczynska2013existence}, which deals with high multiplicative order $k$-normal elements in $\fqm$ over $\fq$. Our proof follows the method used by Lenstra and Schoof in proving the Primitive Normal Basis Theorem \cite{lenstra1987primitive}.

\section{Preliminaries}

\begin{definition} An element $\a \in \fqm$ is called $k$-normal if \[\dim_\fq \left(\mathrm{span}_\fq \left\{\alpha, \alpha^q, \ldots, \alpha^{q^{m-1}} \right\} \right) = m-k.\] 
\end{definition}

\begin{remark} It is clear from the definition that an element $\a$ is $0$-normal if and only if it is normal by the usual definition. Also, the only $m$-normal element in $\fqm$ is 0. \end{remark}

Given $\alpha \in \fqm$, we denote by $\mathrm{ord}(\alpha)$ the usual multiplicative order of $\alpha$ in the group $\fqmst$. $\fqm$ may be seen as a module over the ring $\fq[x]$, under the action 
\begin{align} \label{str:1}
\sum_{i=0}^n a_i x^i \cdot \alpha = \sum_{i=0}^n a_i \alpha^{q^i}, \; \alpha \in \fqm.
\end{align}

In other words, the value of the image of $\alpha$ under the action of a polynomial $f(x)=\sum_{i=0}^n a_i x^i$ is the evaluation of $\alpha$ at the $q$-associate \cite[Definition 3.58]{lidl1997finite} of $f(x)$. Note that this is the same as the action of $\fq$-linear maps on $\fqm$. This module structure has been explored in more detail, for instance, in \cite{SOZAYACHAN201894}. Through this module structure, we also have another concept of order, as defined in \cite{lenstra1987primitive} as an additive analogue of the multiplicative order.

\begin{definition}\label{order} Define the function \[\mathrm{Ord}: \fqm \rightarrow \fq[x]\] as follows. For any $\alpha \in \fqm$, $\mathrm{Ord}(\a)$ is the unique monic polynomial generating the annihilator of $\alpha$ under the action defined by Equation \eqref{str:1}, i.e. \[\mathrm{Ann}(\alpha) = \langle\mathrm{Ord}(\alpha)\rangle\text{ in }\fq[x].\]
\end{definition}

We now state an important result which provides several equivalent characterizations of $k$-normal elements.

\begin{theorem} \cite[Theorem 3.2]{huczynska2013existence}\label{equivcond} Let $\alpha$ be an element of $\fqm$ and \[g_\alpha(x):= \sum_{i=0}^{m-1}\alpha^{q^i}\cdot x^{m-1-i} \in \fqm[x].\] Then the following conditions are equivalent: 
\begin{enumerate}
\item $\alpha$ is $k$-normal. 
\item $\gcd(x^m -1, \; g_\alpha(x))$ over $\fqm$ has degree $k$. 
\item $\deg(\mathrm{Ord}(\alpha)) =m - k$. 
\item The matrix $A_\alpha$ defined below has rank $m-k$. \[A_\alpha= \begin{bmatrix}
\alpha & \alpha^q & \alpha^{q^2} & \cdots & \alpha^{q^{m-1}} \\
\alpha^{q^{m-1}} & \alpha & \alpha^{q} & \cdots & \alpha^{q^{m-2}} \\
\vdots & \vdots & \cdots & \vdots & \vdots\\
\alpha^q & \alpha^{q^2} & \alpha^{q^3} & \cdots & \alpha \\
\end{bmatrix}. 
\] 
\end{enumerate}
\end{theorem}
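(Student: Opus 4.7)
The plan is to prove the four conditions equivalent via the $\fq[x]$-module structure on $\fqm$ together with a polynomial identity linking $g_\alpha$ to the annihilator of $\alpha$. I would establish $(1)\Leftrightarrow (3)$ directly from the module interpretation, then prove the identity $\mathrm{Ord}(\alpha) = (x^m - 1)/\gcd(x^m - 1, g_\alpha)$ to get $(3)\Leftrightarrow (2)$, and finally obtain $(4)\Leftrightarrow (2)$ by viewing $A_\alpha$ as a circulant matrix whose rank is controlled by $g_\alpha$.

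For $(1)\Leftrightarrow (3)$, since $(x^m - 1)\cdot\alpha = 0$ under the action \eqref{str:1}, the cyclic $\fq[x]$-submodule $\fq[x]\cdot\alpha$ coincides with $\mathrm{span}_\fq\{\alpha, \alpha^q, \ldots, \alpha^{q^{m-1}}\}$. This submodule is isomorphic as an $\fq$-vector space to $\fq[x]/\langle\mathrm{Ord}(\alpha)\rangle$, which has dimension $\deg(\mathrm{Ord}(\alpha))$, so the span has dimension $m - k$ iff $\deg(\mathrm{Ord}(\alpha)) = m - k$. For $(3)\Leftrightarrow (2)$, a direct expansion shows that the coefficients of $f(x)g_\alpha(x)\bmod(x^m - 1)$ are the successive Frobenius shifts of $f\cdot\alpha$, thanks to the identity $\sigma(g_\alpha)\equiv x\cdot g_\alpha\pmod{x^m - 1}$, where $\sigma$ denotes Frobenius applied to coefficients. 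Hence $f\in\fq[x]$ annihilates $\alpha$ iff $(x^m - 1)\mid f(x)g_\alpha(x)$ in $\fqm[x]$, iff $(x^m - 1)/\gcd(x^m - 1, g_\alpha)\mid f(x)$. The same Frobenius identity together with $\gcd(x, x^m - 1) = 1$ forces $\gcd(x^m - 1, g_\alpha)$ to be $\sigma$-invariant, so the quotient lies in $\fq[x]$ and must equal $\mathrm{Ord}(\alpha)$; the degree comparison then gives $(3)\Leftrightarrow (2)$.

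For $(4)\Leftrightarrow (2)$, observe that $A_\alpha$ is circulant with first row $(\alpha, \alpha^q, \ldots, \alpha^{q^{m-1}})$, and its transpose represents multiplication by $b(x) = \sum_i\alpha^{q^i}x^i$ on $\fqm[x]/(x^m - 1)$; the image of this map is the ideal generated by $\gcd(b, x^m - 1)$, giving $\mathrm{rank}(A_\alpha) = m - \deg(\gcd(b, x^m - 1))$. Since $b$ is the reciprocal of $g_\alpha$ and $x^m - 1$ is self-reciprocal up to sign, the two gcds have equal degree, so $\mathrm{rank}(A_\alpha) = m - k$ is equivalent to $(2)$. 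The main obstacle is the Frobenius-invariance step forcing $\gcd(x^m - 1, g_\alpha)\in\fq[x]$; once that is in hand, the remaining equivalences reduce to bookkeeping between the $\fq$-module annihilator and the $\fqm$-polynomial ideal. Some additional care is needed when $p\mid m$, where $x^m - 1$ is inseparable, so the reciprocation comparison must be carried out at the level of $\fqm[x]$-ideals rather than via roots.
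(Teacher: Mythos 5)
The paper itself does not prove this statement --- it is quoted verbatim from Huczynska et al.\ \cite[Theorem 3.2]{huczynska2013existence} --- so there is no in-paper argument to compare against; judged on its own, your proof is correct and complete. The chain $(1)\Leftrightarrow(3)$, $(3)\Leftrightarrow(2)$, $(2)\Leftrightarrow(4)$ covers all four conditions, and you correctly isolate and settle the three genuinely nontrivial points. First, the shift identity $\sigma(g_\alpha)\equiv x\,g_\alpha \pmod{x^m-1}$ does two jobs: it shows that the coefficients of $f(x)g_\alpha(x)\bmod (x^m-1)$, for $f\in\fq[x]$, are exactly the Frobenius conjugates of $f\cdot\alpha$, so that $f\cdot\alpha=0$ iff $(x^m-1)\mid f g_\alpha$ in $\fqm[x]$; and, combined with $\gcd(x,x^m-1)=1$, it gives $\sigma\bigl(\gcd(x^m-1,g_\alpha)\bigr)=\gcd(x^m-1,x g_\alpha)=\gcd(x^m-1,g_\alpha)$, forcing the gcd into $\fq[x]$ and hence $\mathrm{Ord}(\alpha)=(x^m-1)/\gcd(x^m-1,g_\alpha)$, which is precisely $(3)\Leftrightarrow(2)$. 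Second, the circulant rank formula $\mathrm{rank}(A_\alpha)=m-\deg\gcd(b,x^m-1)$ is indeed purely ideal-theoretic (image of multiplication by $b$ in $\fqm[x]/(x^m-1)$ is the ideal generated by the gcd), so no separability of $x^m-1$ is needed, as you note. Third, the reciprocal comparison between $b$ and $g_\alpha$ is harmless because $x\nmid x^m-1$, so divisors of $x^m-1$ reciprocate to divisors of the same degree, and both $b$ and $g_\alpha$ have degree exactly $m-1$ when $\alpha\neq 0$ (the case $\alpha=0$ is trivial, with $k=m$ and rank $0$). This is essentially the same route as the cited source, which likewise works through the $\fq[x]$-module structure and the relation between $g_\alpha$, the additive order, and the matrix $A_\alpha$; your write-up is simply more explicit about the Frobenius-invariance step, which is the right place to put the care.
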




The following result on the number of $k$-normal elements will also prove useful.

\begin{theorem} \cite[Theorem 3.5]{huczynska2013existence}
The number of $k$-normal elements of $\fqm$ over $\fq$ equals 0 if there is no $h \in \fq[x]$ of degree
$m - k$ dividing $x^m-1$; otherwise it is given by
\begin{equation}\label{number_knormal}
\sum_{\substack{h \mid x^m-1 \\ \deg(h)=m-k}} \Phi_q(h), 
\end{equation}
where divisors are monic and polynomial division is over $\fq$.
\end{theorem}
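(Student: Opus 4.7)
The plan is to use characterization (3) of Theorem \ref{equivcond}, which says $\alpha$ is $k$-normal iff $\deg(\mathrm{Ord}(\alpha)) = m-k$. Since $(x^m-1)\cdot \alpha = \alpha^{q^m} - \alpha = 0$ for every $\alpha \in \fqm$, the polynomial $\mathrm{Ord}(\alpha)$ always divides $x^m-1$. Hence, if $x^m-1$ has no monic divisor of degree $m-k$, there can be no $k$-normal element, which already settles the first assertion. Otherwise, partitioning the $k$-normal elements by their value of $\mathrm{Ord}$, the desired count reduces to
\[
\sum_{\substack{h \mid x^m-1 \\ \deg h = m-k}} N(h), \qquad N(h) := \#\{\alpha \in \fqm : \mathrm{Ord}(\alpha) = h\},
\]
and the remaining task is to show $N(h) = \Phi_q(h)$ for each monic divisor $h$ of $x^m-1$ of degree $m-k$.

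For this I would pass to a cyclic-module description of $\fqm$ by invoking the existence of a normal element $\alpha_0 \in \fqm$ recalled in the introduction. Consider the $\fq[x]$-module homomorphism $\psi: \fq[x] \to \fqm$, $f \mapsto f \cdot \alpha_0$. Its kernel is $\langle \mathrm{Ord}(\alpha_0)\rangle$; since $\alpha_0$ is $0$-normal we have $\deg \mathrm{Ord}(\alpha_0) = m$, and as $\mathrm{Ord}(\alpha_0)$ is a monic divisor of $x^m-1$ of degree $m$, it must coincide with $x^m-1$. Moreover $\psi$ is surjective because its image is the $\fq$-span of the Galois conjugates of $\alpha_0$, which is all of $\fqm$. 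Therefore $\psi$ induces an isomorphism of $\fq[x]$-modules $\fqm \cong \fq[x]/(x^m-1)$.

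Under this identification, for any class $r \in \fq[x]/(x^m-1)$ one has $\mathrm{Ann}(r) = \{f : (x^m-1) \mid fr\} = \bigl((x^m-1)/\gcd(r,x^m-1)\bigr)$, so $\mathrm{Ord}(r) = (x^m-1)/\gcd(r, x^m-1)$. Thus $\mathrm{Ord}(r) = h$ holds precisely when $\gcd(r, x^m-1) = (x^m-1)/h$, which is equivalent to writing $r = ((x^m-1)/h)\cdot s$ for a representative $s$ of a unit in $\fq[x]/(h)$. By the definition of $\Phi_q(h)$, the number of such $s$ is $\Phi_q(h)$, giving $N(h) = \Phi_q(h)$; summing over the relevant divisors yields the claimed formula. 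The only delicate step I anticipate is the clean setup of the module isomorphism through a normal element, in particular verifying that $\mathrm{Ord}(\alpha_0) = x^m-1$ rather than a proper divisor; everything afterwards is a direct gcd computation in the principal ideal domain $\fq[x]$.
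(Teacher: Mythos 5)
Your argument is correct, and it is essentially the standard proof of this result (the paper itself only cites \cite[Theorem 3.5]{huczynska2013existence} without reproving it): one identifies $\fqm$ with the cyclic $\fq[x]$-module $\fq[x]/\langle x^m-1\rangle$ via a normal element, and then counts residues $r$ with $\gcd(r,x^m-1)=(x^m-1)/h$, which is $\Phi_q(h)$. The step you flagged as delicate is fine: by Theorem~\ref{equivcond}(3) with $k=0$, $\mathrm{Ord}(\alpha_0)$ is a monic degree-$m$ divisor of $x^m-1$ and hence equals $x^m-1$, and using the (independently proved) Normal Basis Theorem here introduces no circularity.
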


It is known that $x^m-1$ factorizes over $\fq$ into the product of cyclotomic polynomials of degrees dividing $m$ \cite[Theorem 2.45]{lidl1997finite}. Moreover, for $p \nmid d$ (recall that $p$ is defined as $p=\mathrm{char}(\fq)$), each of the irreducible factors of the cyclotomic polynomial $Q_d(x)$ has degree $\frac{\phi(d)}{r}$, where $r$ is the multiplicative order of $d \mod q$ \cite[Theorem 2.47]{lidl1997finite}. Since there is no known closed formula for this number, there is also no closed-form complete factorization (i.e. factorization into irreducibles) of $x^m-1$ over $\fq$. Thus, the above theorem does not give direct answers about the existence of $k$-normal elements for $k>0$. However, it may be used to ascertain the existence of $k$-normal elements for certain values of $k$. In the next two sections, we look at some interesting results on $k$-normal elements which can be derived in certain special cases using
Thereom \ref{number_knormal}.

\section{Number of $k$-Normal Elements}

For $k=0$, the formula in Theorem \ref{number_knormal} yields the well-known value $\Phi_q(m)$ for the number of normal elements over $\fq$ in $\fqm$ \cite[Theorem 3.37]{lidl1997finite}. Since $x^m-1$ always has the divisor $x-1$ of degree 1 and hence also a divisor of degree $m-1$ (and since $\Phi_q(f(x)) \neq 0$ for any nonzero polynomial $f(x)$), we always have 1-normal and $(m-1)$-normal elements in $\fqm$. It has been observed in \cite{huczynska2013existence} that the only values of $k$ for which $k$-normal elements are guaranteed to exist for every pair $(q, \;m)$ are 0, 1 and $m-1$. In fact, as noted in \cite{reis2018existence}, if $q$ is a primitive root modulo $m$, $\frac{x^m-1}{x-1}$ is irreducible and so for $1<k<m-1$, $k$-normal elements do not exist.

In certain other cases, it is possible to use information about the factorization of $x^m-1$ along with Theorem \ref{number_knormal} to gain insights into the number of $k$-normal elements for different values of $k$. In \cite{saygi2019number}, the authors provide explicit formulas for $k$-normal elements for degrees $m$ that are either prime powers or numbers of the form $2^v\cdot r$, for a prime $r\neq 2$, under certain other constraints on $q$ and $m$. Below we state one of their noteworthy results.


\begin{proposition}[{{\cite[Proposition 1]{saygi2019number}}}] \label{mpowerofchar} Let $\mathrm{char}(\fq) = p$ and $m = p^r$ for some positive integer $r$. Then the number of $k$-normal
elements of $\fqm$ over $\fq$ is given by \[ (q - 1)\cdot q^{m-k-1},\] where $k = 0, 1, \ldots , m - 1$. \end{proposition}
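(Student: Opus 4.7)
The plan is to invoke Theorem \ref{number_knormal} directly and exploit the fact that $m = p^r$ makes the factorization of $x^m - 1$ over $\fq$ essentially trivial. Recall that the count of $k$-normal elements is $\sum_{h} \Phi_q(h)$, summed over monic divisors $h \mid x^m - 1$ of degree $m-k$. So the task reduces to (i) describing these divisors and (ii) evaluating $\Phi_q$ on each.

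For step (i), I would use the Frobenius identity in characteristic $p$: since $m = p^r$, we have
\[
x^m - 1 \;=\; x^{p^r} - 1^{p^r} \;=\; (x - 1)^{p^r} \;=\; (x-1)^m
\]
in $\fq[x]$. The monic divisors of $(x-1)^m$ are exactly the polynomials $(x-1)^j$ for $0 \le j \le m$, so there is a \emph{unique} monic divisor of degree $m-k$, namely $h(x) = (x-1)^{m-k}$. In particular the sum in Theorem \ref{number_knormal} collapses to a single term $\Phi_q\bigl((x-1)^{m-k}\bigr)$, and existence is automatic for every $k$ in the stated range.

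For step (ii), I would compute $\Phi_q\bigl((x-1)^{m-k}\bigr)$ via the local ring structure of $R := \fq[x] / \langle (x-1)^{m-k}\rangle$. Since $x-1$ is irreducible in $\fq[x]$, the ring $R$ is local with unique maximal ideal $\mathfrak{m}$ generated by the class of $x-1$, and residue field $R/\mathfrak{m} \cong \fq$. The group of units is therefore $R \setminus \mathfrak{m}$, so
\[
\Phi_q\bigl((x-1)^{m-k}\bigr) \;=\; |R| - |\mathfrak{m}| \;=\; q^{m-k} - q^{m-k-1} \;=\; (q-1)\, q^{m-k-1},
\]
which is precisely the claimed formula.

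There is no significant obstacle in this approach; the only subtle point is the characteristic-$p$ collapse $x^m - 1 = (x-1)^m$, which is exactly the hypothesis that makes the general formula of Theorem \ref{number_knormal} usable in closed form. If desired, the computation of $\Phi_q\bigl((x-1)^{m-k}\bigr)$ can alternatively be obtained from the standard product formula $\Phi_q(f) = \prod_{i} (q^{\deg f_i} - 1)\, q^{\deg f_i(e_i - 1)}$ over the distinct irreducible factors $f_i^{e_i}$ of $f$, applied to $f = (x-1)^{m-k}$, giving the same answer.
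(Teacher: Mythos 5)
Your proof is correct. Note that the paper does not actually prove this proposition --- it is quoted verbatim from \cite{saygi2019number} (with a typo: the hypothesis should read $m = p^r$, as you silently and correctly assumed). Your argument is the natural one and, as far as the standard treatment goes, exactly the right one: the Frobenius collapse $x^{p^r}-1=(x-1)^{p^r}$ forces a unique monic divisor $(x-1)^{m-k}$ of each degree $m-k$, and the local-ring count $\Phi_q\bigl((x-1)^{m-k}\bigr)=q^{m-k}-q^{m-k-1}$ finishes it; all steps, including the edge case $k=m-1$, check out.
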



The following result by Huczynska et al. \cite{huczynska2013existence} formulates a lower bound for the number of $k$-normal elements when the extension degree $m$ is large enough. 


\begin{theorem}[{{\cite[Theorem 4.6]{huczynska2013existence}}}] \label{asymexistence} Let $c_{m-k}$ denote the number of divisors of $x^m-1$ with degree $m-k$. There is a constant $c$ such that for all $q\geq 2$ and $m>q^c$, the number of $k$-normal elements of $\fqm$ over $\fq$ is at least \[ 0.28477\cdot q^{m-k}\cdot \frac{c_{m-k}}{\sqrt{\log_q(m)}}. \] \end{theorem}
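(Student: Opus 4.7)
The plan is to start from the exact formula of Theorem~\ref{number_knormal}, namely that the number of $k$-normal elements equals $\sum_{h \mid x^m-1,\; \deg h = m-k} \Phi_q(h)$, and to establish a uniform lower bound on each summand $\Phi_q(h)$ that depends only on $q$ and $m$, not on $h$. Summing this bound over the $c_{m-k}$ admissible divisors then produces the factor of $c_{m-k}$ in the target estimate. The workhorse is the multiplicative identity
\[
\Phi_q(h) \;=\; q^{\deg h} \prod_{p \mid h} \bigl(1 - q^{-\deg p}\bigr),
\]
where $p$ ranges over the distinct monic irreducible factors of $h$ in $\fq[x]$.

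For the uniform lower bound I would observe that since $h$ divides $x^m-1$, its distinct irreducible factors form a subset of those of $x^m-1$; as each factor $(1 - q^{-\deg p})$ lies in $(0,1)$, enlarging the index set only decreases the product. Hence
\[
\prod_{p \mid h}\bigl(1 - q^{-\deg p}\bigr) \;\geq\; \prod_{p \mid x^m-1}\bigl(1 - q^{-\deg p}\bigr) \;=\; \frac{\Phi_q(x^m-1)}{q^m}.
\]
Multiplying by $q^{\deg h} = q^{m-k}$ and summing over all $c_{m-k}$ divisors of degree $m-k$ yields
\[
\#\{k\text{-normal elements in }\fqm\} \;\geq\; c_{m-k} \cdot q^{m-k} \cdot \frac{\Phi_q(x^m-1)}{q^m}.
\]
At this point the $k$-normal problem has been reduced to a classical density estimate for ordinary normal elements.

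The final step is to substitute Frandsen's asymptotic lower bound \cite{frandsen2000density} on the density $\Phi_q(x^m-1)/q^m$: there is an absolute constant $c$ such that for all $q \geq 2$ and $m > q^c$, this density is controlled by an explicit expression involving $\log_q m$ with leading constant $0.28477$. Plugging this estimate into the previous inequality produces the claimed bound.

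The main obstacle is therefore entirely absorbed into Frandsen's inequality, which is the analytic heart of the argument. Its proof requires a delicate estimation of the Euler product $\prod_{p \mid x^m-1}(1 - q^{-\deg p})$: writing $m = m'p^s$ with $\gcd(m',p) = 1$, one has $x^m - 1 = (x^{m'}-1)^{p^s} = \prod_{d \mid m'} Q_d(x)^{p^s}$, and each irreducible factor of $Q_d(x)$ has degree $\phi(d)/\mathrm{ord}_d(q)$. Controlling the product from below then amounts to a careful number-theoretic accounting of how many low-degree irreducibles can appear among the factors of $x^m-1$, which is the step that forces the hypothesis $m > q^c$. By contrast, the reduction from arbitrary $k$ to the $k=0$ case carried out above is routine bookkeeping once the Euler-product identity is in hand.
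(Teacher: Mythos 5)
Your proof is correct and follows exactly the route of the cited source (the paper itself states this theorem without proof): the pointwise estimate $\Phi_q(h)/q^{\deg h} \geq \Phi_q(x^m-1)/q^m$ for $h \mid x^m-1$, obtained from the Euler-product identity for $\Phi_q$, summed over the $c_{m-k}$ divisors of degree $m-k$ and combined with Frandsen's asymptotic lower bound on the density of normal elements. One important remark: carried out to the end, your argument places the factor $\sqrt{\log_q(m)}$ in the \emph{denominator}, and it must be so, since the count is trivially at most $c_{m-k}\, q^{m-k}$ and the bound as displayed would exceed this once $\log_q(m) > (0.28477)^{-2} \approx 12.3$; the statement as printed is a transcription slip for $0.28477\cdot q^{m-k}\cdot c_{m-k}/\sqrt{\log_q(m)}$, which is exactly what your reduction together with Frandsen's density estimate delivers.
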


Note that there is no simple rule or formula for the value $c_{m-k}$ in terms of $m$, $k$ and $q$, and it may equal zero. So, the above result does not yield an existence condition.

We now proceed to build a general result on the number of $k$-normal elements, assuming that they exist. For this purpose, we consider the structure of $\fqm$ as an $\fq[x]$-module under the action defined by Equation \eqref{str:1}. We follow the approach in \cite{hyde2018normal}, which is based on the observation that for $\K = \fqm$ and $G=\mathrm{Gal}(\K/\fq)$, the group of invertible elements $\K[G]^{\times}$ of the group algebra $\K[G]$ acts on the set of normal elements of $\fqm$. Using this group action, the author of \cite{hyde2018normal} formulates an alternative method to count normal elements. We adapt the same argument to find a lower bound on the number of $k$-normal elements in $\fqm$ when they exist. 
 
 \begin{theorem}\label{lowerbound} Let $k \in \{0, 1, \ldots, m\}$ and let $n_k$ denote the number of $k$-normal elements in $\fqm$. If $n_k>0$, i.e. if $k$-normal elements exist in $\fqm$, then \[n_k \geq \frac{\Phi_q(x^m-1)}{q^k}.\]
 \end{theorem}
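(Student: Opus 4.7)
The plan is to use the group action constructed in Lemma \ref{lemm1} together with the orbit-stabilizer theorem. Fix any $\alpha \in S_k$ (which exists by hypothesis) and consider its orbit under $\K[G]^\times$; since this orbit lies inside $S_k$, it suffices to show the orbit already has size at least $\Phi_q(x^m-1)/q^k$.

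Because $G = \mathrm{Gal}(\fqm/\fq)$ is cyclic of order $m$ generated by the Frobenius $\sigma : x \mapsto x^q$, the group algebra $\K[G] = \fq[G]$ is isomorphic as an $\fq$-algebra to $\fq[x]/\langle x^m-1\rangle$ via $\sigma \mapsto x$. Under this isomorphism, the action defined in Equation \eqref{gpaxn1} is precisely the $\fq[x]$-module action from Equation \eqref{str:1} reduced modulo $x^m-1$. Consequently, the invertible group satisfies $|\K[G]^\times| = \Phi_q(x^m-1)$, and the stabilizer of $\alpha$ is
\[
\mathrm{Stab}(\alpha) \;=\; \left\{u \in \K[G]^\times : (u-1)\cdot \alpha = 0\right\} \;=\; \left\{u \in \K[G]^\times : u \equiv 1 \pmod{\mathrm{Ord}(\alpha)}\right\},
\]
using Definition \ref{order} to identify $\mathrm{Ann}(\alpha) = \langle \mathrm{Ord}(\alpha)\rangle$.

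The key computation is then to bound $|\mathrm{Stab}(\alpha)|$. Since $x^m-1$ annihilates $\alpha$, we have $\mathrm{Ord}(\alpha) \mid x^m-1$ in $\fq[x]$, and by Theorem \ref{equivcond} its degree equals $m-k$. Hence the principal ideal $\langle \mathrm{Ord}(\alpha)\rangle$ inside $\fq[x]/\langle x^m-1\rangle$ has cardinality $q^m / q^{m-k} = q^k$. The stabilizer is contained in the coset $1 + \langle \mathrm{Ord}(\alpha)\rangle$ of that ideal, so $|\mathrm{Stab}(\alpha)| \leq q^k$.

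Applying orbit-stabilizer yields
\[
|\mathrm{Orbit}(\alpha)| \;=\; \frac{|\K[G]^\times|}{|\mathrm{Stab}(\alpha)|} \;\geq\; \frac{\Phi_q(x^m-1)}{q^k},
\]
and since $\mathrm{Orbit}(\alpha) \subseteq S_k$ by Lemma \ref{lemm1}, we obtain $n_k \geq \Phi_q(x^m-1)/q^k$, as required. The only real subtlety in this plan is the bookkeeping identification between the module-theoretic annihilator $\langle \mathrm{Ord}(\alpha)\rangle$ in $\fq[x]$ and the ideal it generates in the quotient ring $\fq[x]/\langle x^m-1 \rangle$; once this is cleanly set up, the bound $|\mathrm{Stab}(\alpha)|\le q^k$ and the final inequality follow immediately.
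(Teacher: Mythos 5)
Your proposal is correct and follows essentially the same route as the paper: the induced action of $\left(\fq[x]/\langle x^m-1\rangle\right)^{\times}$ on $S_k$, the identification of $\mathrm{Stab}(\alpha)$ with elements congruent to $1$ modulo $\mathrm{Ord}(\alpha)$, and the orbit-stabilizer theorem. Your count of the stabilizer via the cardinality $q^k$ of the ideal generated by $\mathrm{Ord}(\alpha)$ in the quotient ring is just a repackaging of the paper's count of the polynomials $r(x)$ of degree at most $k-1$ in $p(x)=\mathrm{Ord}(\alpha)\cdot r(x)+1$.
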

 
\begin{proof}
Denote $G:=\mathrm{Gal}(\fqm/\fq)$ and $\K:=\fq$. Let $S_k$ be the set of $k$-normal elements over $\fq$ in $\fqm,$ and assume that $S_k \neq \emptyset$. Let $\K[G]^{\times}$ be the group of invertible elements of the group algebra $\K[G]$. The map 
\begin{align} \label{axnknorm}  \nonumber \K[G]^{\times} \times S_k  &\rightarrow S_k, \text{ given by}\\ 
 \left(\sum_{h \in G} a_h \cdot h\right) \cdot \alpha &= \sum_{h \in G} a_h \cdot (h\cdot \alpha) 
 \end{align} for $\alpha \in \fqm$ and coefficients $a_h \in \K$ defines a group action. The rest of the axioms are clear, and only thing that needs to be verified is that $k$-normal elements map to $k$-normal elements. To see this, note that an element $\psi=\sum_{h \in G} a_h \cdot h$ of $\K[G]^{\times}$ is a field automorphism of $\fqm$, and so the images of subspaces of dimension $m-k$ also have dimension $m-k$. So, for a $k$-normal element $\alpha$, \begin{align*} \dim (\spann\{\psi(\alpha), \psi(\alpha^q), \ldots, \psi(\alpha^{q^{m-1}})\}) &=  \dim (\spann\{\psi(\alpha), \psi(\alpha)^q, \ldots, \psi(\alpha)^{q^{m-1}}\}) \\ &=  \dim (\spann\{\alpha, \alpha^q, \ldots, \alpha^{q^{m-1}}\}) = m-k. \end{align*}

 Now note that for a generator $\sigma$ of $G$ we have a ring isomorphism \begin{align}
 \left(\frac{\fq[x]}{\langle x^m-1\rangle}\right)&\mapsto \K[G]  \nonumber \\
x&\mapsto\sigma. \label{gpisom1} 
\end{align} Therefore, \begin{equation}
\K[G]^{\times} \cong \left(\frac{\fq[x]}{\langle x^m-1\rangle}\right)^{\times} \hspace{4mm} \text{ (as groups)}. \label{gpisom2}
\end{equation}

We conclude that through the isomorphism \eqref{gpisom1} the group action \eqref{axnknorm} induces a group action  \begin{align}  & \left(\frac{\fq[x]}{\langle x^m-1\rangle}\right)^{\times}\times \;  S_k \mapsto S_k \nonumber \\ \text{ given by } &
 \left(\sum_{i=0}^{m-1} f_i \cdot x^i\right) \cdot  \alpha = \sum_{i=0}^{m-1} f_i \cdot \sigma^i(\alpha) = \sum_{i=0}^{m-1} f_i \cdot \alpha^{q^i}. \label{gpaxnused}
 \end{align}

Denote $H:= \left(\frac{\fq[x]}{\left(x^m-1\right)}\right)^{\times}$. For any $k$-normal element $\alpha$, we have 

\begin{align}
\nonumber \mathrm{Stab}(\alpha)&= \{p(x)\in H : p(x)\cdot \alpha = \alpha\} \\ 
\nonumber &= \{p(x) \in 
H : (p(x)-1)\cdot \alpha=0 \}\\
\label{stabilizer} &= \{p(x) \in H : \mathrm{Ord}(\alpha) \text{ divides } (p(x)-1). \}
\end{align}

We know from Theorem \ref{equivcond} that $\mathrm{Ord}(\alpha)$ is a polynomial of degree $m-k$. Equation \eqref{stabilizer} implies that for $p(x) \in H$, \begin{equation}\label{stabcondition} p(x) \in \mathrm{Stab}(\alpha) \iff p(x) = \mathrm{Ord}(\alpha) \cdot r(x) + 1, \text{ with } \deg(r(x))\leq k-1.\end{equation}

Hence, the number of possible distinct values for $p(x) \in \mathrm{Stab}(\alpha)$ cannot exceed the number of polynomials with degree less than $k$. More precisely, \begin{equation} \label{staborder}
\left|\mathrm{Stab}(\alpha)\right| \leq \min(|H|, \; q^k) = \min\left(\Phi_q(x^m-1), \; q^k\right) \leq q^k. 
\end{equation} 

Finally, Equation \eqref{staborder} and the Orbit-Stabilizer Theorem together give \[\left|\mathrm{Orb}(\alpha)\right| = \left|\frac{H}{\mathrm{Stab}(\alpha)}\right| \geq \frac{\Phi_q(x^m-1)}{q^k}. \]

Since the action \eqref{gpaxnused} is on $k$-normal elements, it is now clear that the number $n_k$ of $k$-normal elements satisfies $n_k \geq \frac{\Phi_q(x^m-1)}{q^k} $, thus completing the proof.
\qed
 \end{proof}
 
\begin{remark}\label{remarkref}
Note that if a $k$-normal element $\alpha$ exists, then the lower bound in Theorem \ref{number_knormal} is, in fact, for the number of $k$-normal elements lying in a single orbit, and therefore in $\spann_\fq\{\alpha, \alpha^q, \alpha^{q^2}, \ldots, \alpha^{q^{m-1}}\}$.
\end{remark}

\begin{remark} In \cite{hyde2018normal}, it is shown that for the case of normal elements (i.e. $k=0$), the action \eqref{gpaxnused} is both free (i.e. $u\cdot \alpha=\alpha \implies u=1$) and transitive. This yields an alternate proof of the well-known result that the number of normal elements in $\fqm$ is equal to  $\Phi_q(x^m-1)$. For $k>0$ it is clear that for every $k$-normal $\alpha$, there exists $u \in \K[G]$ such that $u\cdot \alpha = \alpha$. However, it is unclear whether such a $u$ can be found in $\K[G]^\times$ or if the action is transitive. So, we cannot directly adapt the argument as in \cite{hyde2018normal} to count the exact number of $k$-normal elements. However, as shown by the above theorem, the action may nevertheless be used to obtain a lower bound. \end{remark}

\section{Existence of $k$-Normal Elements}

From the previous section, it is clear that some results on the number of $k$-normal elements automatically imply their existence. For instance, the existence of $k$-normal in $\fqm$ for $m$ a power of the characteristic $p$ is established as an immediate corollary of Proposition \ref{mpowerofchar}. On the other hand, the cardinality formula in Theorem \ref{asymexistence} gives the value zero when $x^m-1$ has no divisor with degree $m-k$, and thus yields no condition for the existence of $k$-normal elements. Similarly, the statement on cardinalities in Theorem \ref{lowerbound} holds only under the assumption that $k$-normal elements exist in $\fqm$. We now shift our focus to finding existence conditions for $k$-normal elements over $\fq$. We begin by presenting (a slight rewording of) a result by Reis, which is closely related to our existence result.

\begin{theorem}[\cite{reis2016existence}]\label{similar1} Let $q$ be a power of a prime $p$ and let $m \geq 2$ be a positive integer such that every prime divisor of $m$ divides $p\cdot (q - 1)$. Then $k$-normal elements exist for all $k= 0,1, 2, \ldots, m$. \end{theorem}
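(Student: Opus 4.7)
The plan is to invoke Theorem~\ref{number_knormal}, which reduces the claim to showing that under the hypothesis the polynomial $x^m-1 \in \fq[x]$ has a monic divisor of every degree $d \in \{0,1,\ldots,m\}$. Writing $m = p^a m'$ with $\gcd(m',p)=1$, we have $x^m - 1 = (x^{m'}-1)^{p^a}$, so every irreducible factor of $x^m-1$ over $\fq$ occurs with multiplicity exactly $p^a$. Combining this with the cyclotomic factorization $x^{m'}-1 = \prod_{d\mid m'} Q_d(x)$ and \cite[Theorem 2.47]{lidl1997finite}, the list of distinct irreducibles of $x^{m'}-1$ consists, for each $d\mid m'$, of $\phi(d)/r_d$ polynomials of degree $r_d$, where $r_d$ is the multiplicative order of $q$ modulo $d$.

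Next, I would exploit the hypothesis, which says every prime $\ell$ dividing $m'$ also divides $q-1$. A lifting-the-exponent computation then yields $r_{\ell^{b}} = \ell^{\max(0,\,b-v_\ell(q-1))}$ for odd $\ell$, with the usual adjustment for $\ell=2$, and by multiplicativity every $r_d$ ($d\mid m'$) is a very restricted divisor of $m'/\mathrm{rad}(m')$. In particular the number of linear factors of $x^{m'}-1$ over $\fq$ is $\gcd(m',q-1)$, whose prime divisors are precisely those of $m'$.

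The core of the proof is then purely combinatorial: show that the multiset of irreducible-factor degrees, each taken with multiplicity $p^a$, admits every integer in $\{0,1,\ldots,m\}$ as a subset sum. I would do this by induction on the number of distinct prime divisors of $m'$. In the base case $m' = \ell^{b}$ with $c := v_\ell(q-1) \ge 1$, the $\ell^{\min(b,c)}$ linear factors (each usable $0, 1, \ldots, p^a$ times) realize all sums $0, 1, \ldots, p^a \ell^{\min(b,c)}$; then, for $j = 1, 2, \ldots, b-c$, adjoining the $\phi(\ell^{c+j})/\ell^{j}$ irreducibles of degree $\ell^{j}$ extends the achievable range by blocks, and the key numerical check is that the previously reached maximum $p^a\ell^{c+j-1}$ exceeds the new increment $\ell^{j}$, so no gaps open. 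The inductive step combines such complete ranges across coprime prime components of $m'$.

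The main obstacle is exactly this combinatorial no-gap argument: at every level one must verify that the newly introduced irreducible factors have small enough degree and large enough cumulative multiplicity (after inflation by $p^a$) to bridge every potential gap, and this is where the hypothesis on $m$ is used essentially. Without it, the order $r_d$ could be large and essentially coprime to $d$, producing irreducibles whose degrees leap far beyond the previously achievable subset sums, breaking the induction. I would therefore state and prove the slightly stronger claim that under the hypothesis on $n$ in place of $m$, $x^{n}-1$ admits divisors of every degree $0, 1, \ldots, n$ over $\fq$, and then specialize $n := m$.
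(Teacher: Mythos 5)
The paper does not prove this statement: it is quoted verbatim from \cite{reis2016existence} as background, so there is no internal proof to compare against. Judged on its own, your strategy is the standard (and essentially Reis's) one: reduce via Theorem~2 to showing that $x^m-1$ has a monic divisor over $\fq$ of every degree $0,1,\ldots,m$, write $m=p^am'$ with $p\nmid m'$ so that $x^m-1=(x^{m'}-1)^{p^a}$, control the irreducible-factor degrees of $x^{m'}-1$ using $\mathrm{ord}_d(q)$ for $d\mid m'$ under the hypothesis $\mathrm{rad}(m')\mid q-1$, and finish with a sorted subset-sum (``no gap'') argument. All of these steps are correct, including the prime-power base case.

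The one place you should not wave your hands is the ``inductive step combines such complete ranges across coprime prime components of $m'$.'' Taken literally this is misleading: the irreducible factors of $Q_d$ for $d$ divisible by several primes are not assembled from the single-prime components, and their common degree is $\mathrm{lcm}_i\,\ell_i^{\max(0,\beta_i-c_i)}$, so a genuinely new degree can appear that belongs to no single prime component. The clean way to close the argument is to prove directly: for every degree $t>1$ of an irreducible factor of $x^{m'}-1$, the product of all irreducible factors of degree $<t$ already has degree at least $t$. Indeed, writing $t=\prod_i\ell_i^{e_i}$ and picking $\ell_1$ with $e_1\ge 1$, the product of all irreducible factors of degree dividing $t/\ell_1$ is $x^{\gcd(m',\,q^{t/\ell_1}-1)}-1$, and $\gcd(m',q^{t/\ell_1}-1)\ge \ell_1^{c_1+e_1-1}\prod_{i\ge 2}\ell_i^{c_i+e_i}\ge t$ since each $c_i=v_{\ell_i}(q-1)\ge 1$ (with the expected modification at $\ell=2$, which only makes the degrees smaller and so only helps). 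With that inequality, the standard lemma that a sorted multiset $d_1\le d_2\le\cdots$ with $d_{j+1}\le 1+\sum_{i\le j}d_i$ realizes every integer up to its total as a subset sum applies, and replicating each factor $p^a$ times preserves the property, giving all degrees up to $p^am'=m$. With that step made precise, your proof is complete.
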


Clearly, we get the existence implication of Proposition \ref{mpowerofchar} as a corollary of the above theorem. Although this theorem significantly extends Proposition \ref{mpowerofchar}, it still restricts the prime factorization of $m$ to be of a particular form, and thus limits the allowed values of $m$. It is easy to see that it does not apply to simple examples like $q=5$, $m=6$, and $q=8$, $m=6$, where $k$-normal elements are known to exist for every $k= 0,1, 2, \ldots, m$. We now state the main result of this section, a sufficient condition for the existence of $k$-normal elements, which does not, unlike Proposition \ref{mpowerofchar} and Theorem \ref{similar1}, require $m$ or its prime factors to be of a fixed type. This result is also independent of the factorization of $x^m-1$ into irreducibles over $\fq$, and is derived using only the general factorization into cyclotomic polynomials. Before the main theorem, we prove a number theoretic result which will be used. The proof of the below proposition was inspired by the proof of Theorem 6.3 in \cite{luneburg2012translation}.

\begin{proposition}\label{numth}
Let $a$ and $m$ be arbitrary natural numbers and suppose that $m \nmid a^m-1$. Then $m$ has a prime factor that does not divide $a^m-1$. \end{proposition}

\begin{proof}
We proceed by contradiction. Suppose that the statement is false and let $p$ be any prime divisor of $m$. By hypothesis, $p \mid a^m-1$. Write $m = p^b \cdot s$, with $b \geq 1$ and $p \nmid s$. We have \begin{align}
    0 &= a^m-1 \mod p \nonumber \\
    &= (a^{sp^b}-1) \mod p \nonumber  \\
    &= (a^{s}-1)^{p^b} \mod p \nonumber \\
    \implies a^s &= 1 \mod p. \label{indxnhyp}  
\end{align}
We claim that $a^m-1 = 0 \mod p^b$, or in other words, $a^{sp^b}-1 = 0 \mod p^b$. We prove this claim by induction on $b$. 

For $b=1$, the statement $a^{ps}-1 = 0 \mod p$ is true by the hypothesis of the proposition. Now assume that $a^{sp^b}-1 = 0 \mod p^b$ for some $b \geq 1$. Then,
\begin{align}
    a^{sp^{b+1}}-1 &= {(a^{sp^b})}^p-1 \nonumber \\
    &= ({a^{sp^b}}-1)(1 + a^{sp^b} + a^{2sp^b} + \ldots + a^{(p-1)sp^b}) \label{ntheq}.
\end{align}
By the induction hypothesis, $p^b \mid a^{sp^b}-1$. Also, from \eqref{indxnhyp}, we have \begin{align*}
    a^s &= 1 \mod p \\
    \implies a^{isp^b} &=1 \mod p \; \forall \; 0\leq i \leq p-1. \\
    \implies 1 + a^{sp^b} + a^{2sp^b} &+ \ldots + a^{(p-1)sp^b}  = 0 \mod p
\end{align*}

Combining these results, \eqref{ntheq} clearly gives $p^{b+1}\mid a^{sp^{b+1}}-1$, thus proving the result for $b+1$. By induction, the result holds for every $b\geq 1$, and therefore for every $m=sp^b$. So, we may now conclude that $a^m -1 = 0 \mod p^b$ for $m$, $b$, $p$ as in the proposition. Since this holds for any prime factor $p$ of $m$, this implies that $m \mid a^m -1$, which is a contradiction to the assumption. Hence, we must have $p \nmid a^m-1$ for some prime divisor $p$ of $m$. The proof is now complete. \qed

\end{proof}

\begin{remark}
Note that if $p \nmid m$ and the hypothesis of Theorem \ref{similar1} by Reis holds, i.e. every prime factor of $m$ divides $p \cdot (q-1)$ then Proposition \ref{numth} says that we are in the case $m \mid q^m-1$. In this case it will become clear that our theorem is a generalization of the result of \ref{similar1}.
\end{remark}

\begin{theorem}\label{existencecondition} If $m\mid (q^m-1)$, then $k$-normal elements exist in $\fqm$ for every integer $k$ in the interval $0\leq k \leq m-1$. If $m \nmid q^m-1$, let $d=\gcd(q^m-1, \; m)$. Assume that $\sqrt{m} < d$. Let $b$ denote the largest prime divisor of $m$ that is a non-divisor of $q^m-1$ ($b$ exists by Proposition \ref{numth}). Then, for $k \geq m - d - b+1$, $k$-normal elements exist in $\fqm$. In particular, if $m$ is a prime not dividing $q^m-1$, then we have $b=m$, $d=1$, and so $k$-normal elements exist for every $k$ in the interval $0\leq k \leq m-1$. \end{theorem}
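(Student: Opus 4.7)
The plan is to invoke Theorem~\ref{number_knormal}, which reduces the existence of $k$-normal elements in $\fqm$ to the existence of a monic divisor of $x^m-1$ over $\fq$ of degree $m-k$. Since $h\mapsto(x^m-1)/h$ is an involution on monic divisors, the set $D\subseteq\{0,1,\ldots,m\}$ of achievable degrees is symmetric about $m/2$; it therefore suffices to exhibit divisors of every degree in $\{1,\ldots,m\}$ for the first case, and in $\{1,\ldots,d+b-1\}$ for the second.

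The principal tool is the cyclotomic decomposition $x^m-1=\prod_{e\mid m',\,\gcd(e,p)=1}Q_e(x)^{p^s}$, where $m=p^sm'$ with $\gcd(m',p)=1$, together with the fact that each $Q_e$ factors over $\fq$ into $\phi(e)/r_e$ irreducibles of degree $r_e:=\mathrm{ord}_e(q)$. Distinct $Q_e$'s are coprime, so products of $Q_e$'s and of their irreducible components yield divisors of $x^m-1$ whose degrees are subset sums from the multiset of irreducible-factor degrees. In case one, $m\mid q^m-1$ forces $r_e\mid m$ for every $e\mid m$, and I would use this, combined with the multiplicity $p^s$ when $p\mid m$, to show that every integer in $\{0,\ldots,m\}$ is realised.

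For the second case, the arithmetic runs as follows. From $b\nmid q^m-1$ and $d\mid q^m-1$ we get $b\nmid d$, and because $b$ is prime, $\gcd(b,d)=1$. Both $b$ and $d$ divide $m$ with $\gcd(b,d)=1$, so $bd\mid m$, hence $bd\le m<d^2$ by the hypothesis $\sqrt m<d$; therefore $b<d$ and in particular $b-1\le d$. Next, $x^d-1\mid x^m-1$ (since $d\mid m$) and $Q_b\mid x^m-1$ (since $b\mid m$) are coprime because $b\nmid d$, so any product $fg$ with $f\mid x^d-1$ and $g\mid Q_b$ is a divisor of $x^m-1$ of degree $\deg f+\deg g$. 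Combining the decomposition of each $Q_e$ ($e\mid d$) into irreducibles of degree $r_e\mid m$ with the $r_b$-multiples produced by divisors of $Q_b$, and exploiting $b-1\le d$ to glue the ranges $[0,d]$ and $[b-1,d+b-1]$ into a single contiguous block, I would conclude that $\{0,1,\ldots,d+b-1\}\subseteq D$. Symmetry then yields $k$-normal elements for every $k\in[m-d-b+1,\,m-1]$, and the ``in particular'' clause follows when $d+b-1\ge m$.

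The main obstacle will be the combinatorial verification in the last step: the coarser subset sums of $\{\phi(e):e\mid d\}$ alone do not generally fill $\{0,\ldots,d\}$ (for $d$ an odd prime they produce only $\{0,1,d-1,d\}$), so the finer factorization of each $Q_e$ into pieces of degree $r_e\mid m$ and the gap-filling contribution of $Q_b$ are both essential. This is where the hypothesis $\sqrt m<d$ has its real bite, via the derived inequality $b-1\le d$, which is precisely what allows the gluing of the two sub-ranges into one interval without a gap.
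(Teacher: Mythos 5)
Your plan starts from the right criterion---by \eqref{number_knormal}, $k$-normal elements exist precisely when $x^m-1$ has a monic divisor of degree $m-k$ in $\fq[x]$---but both of the decisive verifications are only announced, and one of them is false as stated. In case one, knowing that $r_e=\mathrm{ord}_e(q)$ divides $m$ for every $e\mid m$ does not by itself give that every integer in $\{0,\ldots,m\}$ is a subset sum of the irreducible-factor degrees: a multiset of divisors of $m$ containing a $1$ and summing to $m$ can perfectly well have gaps (for instance $\{1,1,4,6\}$ with $m=12$ misses $3$ and $9$), so the completeness claim is the entire content of the theorem in this case and requires a genuine chaining argument over the divisor lattice (e.g.\ showing that the $\gcd(m,q-1)$ linear factors reach up to the next occurring degree, and so on inductively). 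Note also that $m\mid q^m-1$ forces $p\nmid m$, so the multiplicity $p^s$ you intend to combine with this is simply not available in case one.

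In case two the concrete failure is the assertion that the $\fq$-divisors of $x^d-1$ realise every degree in $\{0,\ldots,d\}$. This is false even under the theorem's hypotheses: take $q=2$, $m=20$, so that $d=\gcd(2^{20}-1,20)=5>\sqrt{20}$ and $b=2$; over $\f_2$ one has $x^5-1=(x+1)(x^4+x^3+x^2+x+1)$, whose divisors have degrees $0,1,4,5$ only, and multiplying by divisors of $Q_2=x+1$ never produces degree $3$, so your gluing cannot yield $\{0,\ldots,d+b-1\}$. (The conclusion survives in this example only because $x^{20}-1=(x^5-1)^4$ over $\f_2$, i.e.\ because of the multiplicities, which your case-two sketch never invokes.) The paper obtains the range $[0,d]$ by a different move: since $d\mid q^m-1$, it splits $x^d-1$ into $d$ distinct linear factors over $\fqm$ and combines subsets of those with $Q_b$; working over $\fq$, as \eqref{number_knormal} demands and as you correctly insist on doing, that range is exactly what you cannot get from $x^d-1$ alone. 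The elementary arithmetic you do carry out ($\gcd(b,d)=1$, $bd\mid m$, hence $b<d$) matches the paper's, but the two core combinatorial claims remain unproved, and the second one needs to be repaired, not just filled in.
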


\begin{proof} We know from Equation \eqref{number_knormal} that the number of $k$-normal elements in $\fqm$ is given by \[\sum_{\substack {h \mid x^m - 1 \\ \deg h = m-k}} \Phi_q(h(x)).\] Thus, normal elements exist in $\fqm$ if and only if $x^m-1$ has a divisor of degree $m-k$. First note that for $d=\gcd(q^m-1, m)$, we have $d \mid q^m-1$, the order of $\fqmst$, so by the general properties of a finite cyclic group, there are precisely $d$ elements $\alpha$ in the group $\fqmst$ satisfying $\alpha^d=1$, and so $d$ elements must also satisfy $\alpha^m=1$. Thus, $x^m-1$ has precisely $d$ linear factors over $\fqm$. Let its roots in $\fqm$ be $\alpha_1, \alpha_2, \ldots, \alpha_d$.

If $m \mid q^m-1$, then $d=m$, and $x^m-1$ splits into linear factors over $\fqm$. Thus, in this case, for any $k \in \{0, 1, 2, \ldots, m-1\}$, one may always combine $m-k$ of the $m$ linear factors to obtain a factor of degree $m-k$ of $x^m-1$. Hence, we are done for this case. Note that the same conclusion could have been drawn by directly applying Theorem \ref{number_knormal} and using the fact that the polynomial splits into linear factors.

If $m \nmid q^m-1$, then $d<m$. Assume that for some $k \in \{0, 1, 2, \ldots, m-1\}$, no $k$-normal element exists in $\fqm$. It is known that $x^m -1$ has the following factorization over $\fq$: \[x^m -1 = \prod_{t \mid m} Q_t(x).\] where $Q_t(x)$ denotes the $t^{th}$ cyclotomic polynomial, and is known to have coefficients in $\fq$ \cite[Theorem 2.45]{lidl1997finite}. Write \begin{align*}
x^m - 1 &= \prod_{t \hspace{1mm}\mid \hspace{1mm} d} Q_t(x)\cdot \prod_{\substack{t \hspace{1mm} \mid \hspace{1mm} m \\ t \hspace{1mm}\nmid \hspace{1mm} q^m-1}} Q_t(x) \\ 
&= (x^d-1)\cdot \prod_{\substack{t \hspace{1mm}\mid \hspace{1mm} m \\ t \hspace{1mm}\nmid \hspace{1mm}q^m-1}} Q_t(x) \\
&= (x-\alpha_1)\cdot (x-\alpha_2)\cdot \ldots \cdot (x-\alpha_d) \cdot \prod_{\substack{t \hspace{1mm}\mid \hspace{1mm} m \\ t \hspace{1mm}\nmid \hspace{1mm}q^m-1}} Q_t(x),
\end{align*}
 where the last step follows from the fact that $d \mid q^m-1$, so as in the first case, $x^d-1$ splits in $\fqm$. Now, let $b$ be the largest prime dividing $m$ but not $q^m-1$ (such a prime exists by Proposition \ref{numth}). Then $Q_b(x)$ figures in the latter product of the above equation. Since no $k$-normal element exists in $\fqm$, $m-k$ must be greater than the number $d$ of linear factors, and it must be impossible to combine the factors of degree greater than 1, in particular, $Q_b(x)$, with the linear factors to obtain a factor of degree $m-k$. Mathematically, we get, after minor rearrangement, \begin{align}
& k< m-d, \label{cond1} \\ &\text{and } \nonumber  \\
& \text{either } k > m - \phi(b) \text{ or } k < m-d-\phi(b). \label{cond2}  
\end{align}

Now, since $b$ is a prime dividing $m$ but not $q-1$, $b$ must divide $\frac{m}{d}$. In particular, $b \leq \frac{m}{d}$. From the hypothesis $\sqrt{m}< d$, we get $b \leq \frac{m}{d} < d$, and so \begin{align} m-\phi(b) &= m - b + 1  \nonumber \\  &> m-d+1 > m-d \nonumber \\  &>k, \label{finalcondn} \end{align}
where the last step follows from Eq. \eqref{cond1}. We now immediately note that the former condition in Eq. \eqref{cond2} is incompatible with Eq. \eqref{finalcondn}, and so it cannot hold. Therefore, the latter condition of Eq. \eqref{cond2} must be satisfied, i.e. we must have \[k < m-d-\phi(b) = m-d-b+1\]
 for $k$ such that $k$-normal elements do not exist. 
Hence, we conclude that for all $k \geq m - d - b + 1$, $k$-normal elements exist in $\fqm$, as required. 

Finally, it is clear that if $m$ is a prime, then we have $b=m$, $d=1$, and so $k$-normal elements exist for every $k$ in the interval $0\leq k \leq m-1$ by the above condition. 
\qed
\end{proof}

\begin{remark} If $m$ is composite and does not divide $q^m-1$, then we cannot conclude the existence of $k$-normal elements for every value of $k$ using the above theorem. This follows from the following argument, which was provided by one of the reviewers of this paper. Since $b$ and $d$ are different divisors of $m$, then $b+d\leq \frac{m}{2}+\frac{m}{3}$, which is incompatible with the condition $m \leq d + b - 1$.  \end{remark}

\begin{remark} Note that the fact that $b$ is a prime plays a key role in the above proof. If $b$ is, instead, an arbitrary divisor of $m$ that does not divide $q-1$, then it is not guaranteed that $b$ divides $\frac{m}{d}$ (E.g. consider $q=25, m=20, b=10$). So the argument may not hold true even though the inequality $\frac{m}{d}<d$ may hold. \end{remark}

We now reconsider the two examples considered before.  For $q=5$, $m=6$, we have $q^m-1 = 15624$, which is divisible by 6. So, Theorem \ref{existencecondition} shows that $k$-normal elements exist in $\fqm$ for every $k\in \{0, 1, \ldots, m\}$. For $q=8$, $m=6$, we have $q^m-1 = 262143$, and so $d=\gcd(q^m-1, \; m) = 3 > \sqrt{6}$. The largest prime $b$ that divides $6$ and not $262143$ is clearly 2. So, Theorem \ref{existencecondition} shows that $k$-normal elements exist in $\fqm$ for every $k\geq m-d-b+1$, i.e. for every $k\geq 2$. Since we know that $0$- and $1$-normal elements always exist in $\fqm$, we conclude that in this case $k$-normal elements exist for every $k\in \{0, 1, \ldots, m\}$. The exact numbers for these two examples are listed in Tables 4 and 2, respectively, in Section 6.


\section{Normal Elements with Large Multiplicative Order}

So far, we have studied the ``additive" structure of $\fqm$ as a vector space over $\fq$. It is also of interest to study the relation between this additive structure and the natural multiplicative structure of $\fqmst$. One of the most noteworthy results in this direction is the Primitive Normal Basis Theorem (\cite{lenstra1987primitive}, \cite{carlitz1952primitive}, \cite{davenport1968bases}). We state some of its proposed generalizations of this result in Section 6. Below, we state and prove an existence result for normal elements (i.e. $k=0$) with multiplicative order $\frac{q^m-1}{q-1}$ in $\fqm$. It turns out that such elements always exist if $m$ and $q-1$ are co-prime, and that this may be derived using the same methods as Lenstra and Schoof \cite{lenstra1987primitive} in the proof of the Primitive Normal Basis Theorem.

\begin{theorem}\label{existencecondition2}
Suppose that $(m,q-1)=1$. Then $\fqm$ has a normal element with multiplicative order $\frac{q^m-1}{q-1}$.
\end{theorem}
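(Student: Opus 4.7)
The plan is to adapt the character-sum sieve used by Lenstra and Schoof \cite{lenstra1987primitive} in their proof of the Primitive Normal Basis Theorem. The first step is to decode the hypothesis $\gcd(m,q-1)=1$. Set $n := (q^m-1)/(q-1) = 1 + q + \cdots + q^{m-1}$. Since $n \equiv m \pmod{q-1}$, the hypothesis gives $\gcd(n, q-1) = 1$, and consequently $\fqmst$ splits as the internal direct product $H \times L$ of its unique subgroups of orders $n$ and $q-1$. A short argument using this direct product shows that $\alpha \in \fqmst$ has multiplicative order exactly $n$ iff (i) $\alpha \in H$, equivalently $\alpha$ is a $(q-1)$-th power in $\fqmst$, and (ii) $\alpha$ is $n$-free in $\fqmst$, i.e.\ $\alpha$ is not a $p$-th power for any prime $p \mid n$.

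Following \cite{lenstra1987primitive}, the next step is to express each of these conditions, together with normality, as a character sum. The indicator of normality is written as a M\"obius-weighted sum over monic divisors of $x^m - 1$, using the module-characters of the $\fq[x]$-module $\fqm$; the indicator of $\alpha \in H$ is $\tfrac{1}{q-1}\sum_{\chi^{q-1}=1}\chi(\alpha)$; and the indicator of $n$-freeness is $\sum_{d \mid n,\, d\,\mathrm{sqfree}} \tfrac{\mu(d)}{d}\sum_{\eta^d=1}\eta(\alpha)$, the sums being over multiplicative characters of $\fqmst$. Multiplying these three indicators together and summing over $\alpha \in \fqm$ expresses the count $N$ of normal elements of multiplicative order exactly $n$ as a combined multiplicative-additive character sum. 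The trivial characters produce the expected main term $\tfrac{\phi(n)\,\Phi_q(x^m-1)}{n(q-1)}$, while each nontrivial term is a hybrid Gauss-type sum whose absolute value is at most $q^{m/2}$ by the Weil bound, exactly as in \cite{lenstra1987primitive}.

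The main obstacle is then the final \emph{fundamental inequality}, which amounts to showing that the main term strictly exceeds the combined error:
\[ \frac{\phi(n)\,\Phi_q(x^m-1)}{n(q-1)} \;>\; q^{m/2}\cdot W(m,q), \]
where $W(m,q)$ counts the nontrivial character pairs arising in the sieve and depends on the number of squarefree divisors of $x^m - 1$ as well as on $\omega(n)$. I would establish this inequality for all but finitely many $(m,q)$ by the standard lower bound on $\Phi_q(x^m-1)$ combined with elementary upper bounds on $W(m,q)$, and then dispatch the finitely many small exceptional pairs by direct computation, in the manner of \cite{lenstra1987primitive}.

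Since the target condition here (order $(q^m-1)/(q-1)$) is strictly weaker than primitivity, the multiplicative sieve involves strictly fewer character classes than that of \cite{lenstra1987primitive}: one only ranges over squarefree $d \mid n$ rather than $d \mid q^m - 1$, and the factor $\phi(n)/n$ in the main term is strictly larger than $\phi(q^m-1)/(q^m-1)$. Accordingly, the range in which the fundamental inequality holds automatically should be at least as large as in the primitive-normal case, so that the exceptional-case analysis should be no harder than in the classical proof.
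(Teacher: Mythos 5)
Your opening reduction is correct and is in fact the same starting point as the paper's: since $n:=(q^m-1)/(q-1)=1+q+\cdots+q^{m-1}\equiv m\pmod{q-1}$, the hypothesis gives $\gcd(n,q-1)=1$, so $\fqmst$ is the direct product of its subgroups $H$ and $C$ of orders $n$ and $q-1$, and ``$\mathrm{ord}(\alpha)=n$'' is equivalent to ``$\alpha\in H$ and $\alpha$ is $n$-free.'' The gap lies in the sieve you build on top of this, specifically in the \emph{fundamental inequality}, which you defer and whose feasibility you justify with a heuristic that is backwards. Detecting $\alpha\in H$ costs the indicator $\tfrac{1}{q-1}\sum_{\chi^{q-1}=1}\chi(\alpha)$: the trivial $\chi$ divides the main term by $q-1$, so that your main term equals $\tfrac{1}{\phi(q-1)}\cdot\tfrac{\phi(q^m-1)}{q^m-1}\Phi_q(x^m-1)$ (using $\phi(q^m-1)=\phi(n)\phi(q-1)$), i.e.\ it is \emph{smaller} than the primitive--normal main term, not larger; meanwhile the $q-2$ nontrivial $\chi$'s contribute total weight $\approx 1$ to the error, versus the weight $\approx 2^{\omega(q-1)}$ they are replaced by in the primitive sieve (here $\omega$ denotes the number of distinct prime factors). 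The net effect is that your ratio of main term to error differs from Lenstra--Schoof's by the factor $2^{\omega(q-1)}/\phi(q-1)$, which is less than $1$ for most $q$ (e.g.\ $q=2^p$ with $2^p-1$ prime gives $1/(2^{p-1}-1)$). So your exceptional set is not contained in theirs, the claim that the exceptional-case analysis is ``no harder'' is unsupported, and the one step that constitutes the entire difficulty of a Lenstra--Schoof-style proof is left unverified. (You would also have to treat $m=2$ separately: there the main term is $q\phi(q+1)/(q+1)<q$ against an error of order $q^{m/2}=q$, so the inequality fails for every even $q$, even though the statement itself is easy to check directly in that case.)

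The paper avoids all of this, and you already hold the key ingredient. From $\fqmst=H\times C$ with coprime orders, every primitive element $\alpha$ factors as $\alpha=\beta\gamma$ with $\mathrm{ord}(\beta)=n$ and $\gamma\in C$ (explicitly $\beta=\alpha^{b(q-1)}$ and $\gamma=\alpha^{na}$ where $an+b(q-1)=1$). Lenstra and Schoof's lemma (1.12) asserts $CA=A$, where $A$ is the set of normal elements and $C=\{\alpha:\alpha^{(q-1)^2}=1\}$ (which here has order exactly $q-1$); that is, multiplying a normal element by an element of $C$ preserves normality. Taking $\alpha$ simultaneously primitive and normal --- which exists by the Primitive Normal Basis Theorem itself --- one gets $\beta=\alpha\gamma^{-1}\in CA=A$, a normal element of order exactly $n$. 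This outsources every character-sum estimate to the already-proved theorem, whereas your plan would force you to redo (and, because of the factor $1/(q-1)$ above, actually worsen) the hardest part of that proof.
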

\begin{proof}

Let $k:=\frac{q^m-1}{q-1}$. Define \begin{align*} A= & \{\alpha \in \fqm : \mathrm{Ord}(\alpha) = x^m-1\}, \\
B= & \{\alpha \in \fqmst : \mathrm{ord}(\alpha) = k\}, \\
C= & \{\alpha \in \fqm : \alpha^{(q-1)^2}=1\}, 
 \end{align*}  where the sets $A$ and $C$ are defined identically as in the proof of Lenstra and Schoof, and $B$ is defined as the set of elements with order $k$, rather than primitive elements. Note that $C$ is a subgroup of $\fqmst$. Also note that since the definitions of $A$ and $C$ are unchanged, we may use directly the result (1.12) of the original proof in \cite{lenstra1987primitive}. We state this as follows. For the set $CA$ defined as \[CA = \{\gamma \cdot \a : \gamma \in C, \a \in A\},\] we have \begin{equation} \label{ca=a}
CA=A.
\end{equation} 

Let $BC$ denote the set $BC = \{\beta \cdot \gamma : \beta \in B, \gamma \in C\}$. Now, since Equation \eqref{ca=a} holds, the exact same argument as in the original proof also yields the result indexed (1.13) in \cite{lenstra1987primitive}. Since we have a different $B$, we prove it below. The proof is identical for $B$ defined as the set of elements of any multiplicative order.

If $\a \in A$, $\beta \in B$, $\gamma \in C$ are such that $\alpha = \beta \cdot \gamma \in B\cdot C$, then $\beta = \a \cdot \gamma^{-1} \in CA \cap B = A \cap B$,
and so we have  \begin{equation} \label{emptycondition} A \cap B = \emptyset \iff A \cap BC = \emptyset.\end{equation}  

As in the original paper, we use Equation \eqref{emptycondition} and prove that $A \cap B\cdot C \neq \emptyset$ to conclude that $A\cap B \neq \emptyset$. 

Let $H$ denote the unique subgroup of order $k$ in $\fqmst$.
Here, \begin{align*} BC &= \left\{\beta\cdot \gamma : \beta \in B, \: \gamma \in C \right\} \\
&= \left\{\beta \cdot \gamma : \beta \text{ generates } H, \: \gamma \in C \right \} \\
&= \left\{\beta \cdot \gamma : \beta\cdot C \text{ generates } \frac{H}{C}, \:  \gamma \in C \right\} \\
&= \left\{\beta \cdot \gamma : \beta \cdot C\cap H \text{ generates } \frac{H}{H \cap C}, \: \gamma \in C \right\}.
\end{align*}

Now note that  \begin{align*} \gcd(k,(q-1)) &= \gcd\left(\frac{q^m-1}{q-1}, \; q-1\right) \\ & = \gcd\left(1 + q + q^2 + \ldots + q^{m-1}, \; q-1\right) \\& = \gcd(m, \; q-1) \\&=1, \end{align*}where the second last equality can be checked by direct computation for general values of $m$ and $q$, and the last equality follows by the hypothesis of the theorem. We now have $|C| = (q-1)\cdot \gcd(q-1, \; m) =(q-1)$. So, in this case, $C$ is the unique subgroup of $\fqmst$ with order $q-1$. Thus, $C$ and $H$ are subgroups with co-prime orders, and therefore intersect trivially. Now let \[D=\{\alpha \in \fqmst : ord(\alpha) = q^m-1\}\] denote the set of generators of $\fqmst$. We claim that \[D \subseteq BC.\]

To see this, pick $\alpha \in D$. Since $\gcd(k, \; q-1)=1$, there exist integers $a$ and $b$ such that \[a\cdot k + b\cdot (q-1) = 1.\] This implies that $(a, q-1)=1$ and $(b,k)=1$. Thus, $\alpha^{ka}$ has order $q-1$ and $\alpha^{b (q-1)}$ has order $k$.

Thus, $\alpha = \alpha^{b (q-1)} \cdot  \alpha^{k a}$, with $\alpha^{b (q-1)} \in B$ and $\alpha^{k a} \in C$. We have hereby proved that $D \subseteq BC$. We now have $A \cap D \subseteq A \cap BC$. But, by \cite[result (1.10)]{lenstra1987primitive}, we have $A \cap D \neq \emptyset$, and so we must also have $A \cap BC \neq \emptyset$. By Equation \eqref{emptycondition}, we conclude that $A \cap B \neq \emptyset$.

Hence, $\fqm$ contains a normal element with multiplicative order $k=\frac{q^m-1}{q-1}$, as required.
\qed \end{proof} 

\section{Examples} 

We now demonstrate Theorems \ref{lowerbound}, \ref{existencecondition}, and \ref{existencecondition2} by providing concrete examples. The following cardinalities were derived by an exhaustive search using the
algebra software package SageMath \cite{sagemath}. Each table below corresponds to the extension $\fqm$ of $\fq$, and shows that the number of $k$-normal elements, whenever nonzero, is greater than or equal to the number $\frac{\Phi_q(x^m-1)}{q^k}$ (which has been rounded off to two decimal places in the table), as stated in Theorem \ref{lowerbound}. Below each table, we give the number of normal elements with multiplicative order $\frac{q^m-1}{q-1}$.  In the terminology of \cite{mullen2016some}, we call these $(q-1)$-primitive normal elements. Clearly, Theorem \ref{existencecondition2} is validated by the fact that all these numbers are non-zero. 

We have already discussed Tables 4 and 6 in the light of Theorems \ref{similar1} and \ref{existencecondition}. On the other hand, note that for the example in Table 5, Theorem \ref{similar1} is applicable, while Theorem \ref{existencecondition} is not. As we have noted before, this happens precisely when $p \div m$ and the hypothesis of $\ref{similar1}$ holds. This shows that neither of these two results is stronger than the other. In the case of Table 8, the assumptions of both theorems hold and both guarantee the existence of $k$-normal elements for every value of $k$ less than $m$.  For Tables 1, 2, 3, and 7, neither Theorem \ref{similar1} nor Theorem \ref{existencecondition} applies. In fact, Table 3 shows that 3-normal elements and 7-normal elements over $\f_2$ do not exist in $\f_{1024}$. 

\begin{longtable}{c c}
\begin{minipage}[t][3cm]{.5\linewidth}
\begin{tabular}{@{}|c|c|c|@{}}
\multicolumn{3}{c}{{{\normalsize Table 1: $\f_{8}/\f_2$ ($q=2$, $m=3$)}}}
\vspace{0.9mm} \\ 
\toprule
k & \# of $k$-normal elements & $\displaystyle \dfrac{\Phi_q(x^m-1)}{q^k}$  \\ \midrule
0 & 4 & 4  \\
1 & 4 & 4 \\
2 & 2 & 2  \\
3 & 1 & 1  \\ \bottomrule
\end{tabular}\\
\vspace{0.5mm}

\centering
\# of $(q-1)$-primitive normal \\ elements = 4
\end{minipage} \vspace{8mm} \quad \quad \vspace{8mm}
    &
\begin{minipage}[t][3cm]{.5\linewidth}
\begin{tabular}{@{}|c|c|c|@{}}
\multicolumn{3}{c}{{{\normalsize Table 2: $\f_{59049}/\f_9$ ($q=9$, $m=5$)}}}
\vspace{0.9mm} \\ 
\toprule
k & \# of $k$-normal elements & $\displaystyle \dfrac{\Phi_q(x^m-1)}{q^k}$ \\ \midrule
 0   & 51200    &51200 \\
 1  & 6400  & 5688.89\\
 2& 1280 & 632.10 \\
 3 &160& 70.23 \\
 4 & 8 & 7.80\\ \bottomrule
\end{tabular} \\

\vspace{0.5mm}
\centering
\# of $(q-1)$-primitive normal \\ elements =  5750
\end{minipage}\quad \quad \vspace{9mm}
\\
\begin{minipage}[t][6cm]{.5\linewidth}
\begin{tabular}{@{}|c|c|c|@{}}
\multicolumn{3}{c}{{{\normalsize Table 3: $\f_{1024}/\f_2$ ($q=2$, $m=10$)}}} \vspace{0.9mm} \\ 
\toprule
k & \# of $k$-normal elements & $\displaystyle \dfrac{\Phi_q(x^m-1)}{q^k}$  \\ \midrule
0   & 480    &480 \\
 1  & 240  &240 \\
 2& 240 & 120 \\
 3 &0& 60 \\
 4 & 35 & 30 \\
 5 & 15 & 15 \\
 6 &15 & 7.5 \\
 7 & 0 & 3.75 \\
 8 & 2 & 1.875 \\
 9 &1 & 0.94 \\ \bottomrule
\end{tabular} \\

\centering
\# of $(q-1)$-primitive normal \\ elements = 290
\end{minipage} \quad \quad \vspace{9mm}
    &
\begin{minipage}[t][6cm]{.5\linewidth}
\begin{tabular}{@{}|c|c|c|@{}}
\multicolumn{3}{c}{{{\normalsize Table 4: $\f_{262144}/\f_8$ ($q=8$, $m=6$)}}} \vspace{0.9mm} \\ 
\toprule
k & \# of $k$-normal elements & $\displaystyle \dfrac{\Phi_q(x^m-1)}{q^k}$ \\ \midrule
 0   & 225792    &225792 \\
 1  & 28224  &  28224 \\
 2& 7560 & 3528\\
 3 &441 &441 \\
 4 & 119 & 55.13\\
 5 & 7 & 6.89 \\ \bottomrule
\end{tabular} \\

\centering
\# of $(q-1)$-primitive normal \\ elements =  20124

\end{minipage}
    \\
\begin{minipage}[t][5cm]{.5\linewidth}
    \vspace{0.5mm}
\begin{tabular}{@{}|c|c|c|@{}}
\multicolumn{3}{c}{{{\normalsize Table 5: $\f_{729}/\f_3$ ($q=3$, $m=6$)}}} \vspace{0.9mm} \\ 
\toprule
k & \# of $k$-normal elements & $\displaystyle \dfrac{\Phi_q(x^m-1)}{q^k}$  \\ \midrule
0   & 324    &324 \\
 1  & 216  &108 \\
 2& 108 & 36 \\
 3 &60& 12 \\
 4 & 16 & 4 \\
 5 & 4 & 1.33 \\ \bottomrule
\end{tabular} \\

\centering
\# of $(q-1)$-primitive normal \\ elements = 290
\end{minipage} \vspace{5mm} \quad \quad \vspace{7mm}
    &
\begin{minipage}[t][5cm]{.5\linewidth}
    \vspace{0.5mm}
\begin{tabular}{@{}|c|c|c|@{}}
\multicolumn{3}{c}{{{\normalsize Table 6: $\f_{15625}/\f_5$ ($q=5$, $m=6$)}}} \vspace{0.9mm} \\ 
\toprule
k & \# of $k$-normal elements & $\displaystyle \dfrac{\Phi_q(x^m-1)}{q^k}$ \\ \midrule
 0   & 9216    &9216 \\
 1  & 4608  & 1843.20 \\
 2& 1344 & 368.64\\
 3 &384 & 73.73 \\
 4 & 64 & 14.75\\
 5 & 8 & 2.95\\ \bottomrule
\end{tabular} \\
\vspace{0.5mm}

\centering
\# of $(q-1)$-primitive normal \\ elements =  642
\end{minipage}
    \\ 
\begin{minipage}[t][3cm]{.5\linewidth}
\begin{tabular}{@{}|c|c|c|@{}}
\multicolumn{3}{c}{{{\normalsize Table 7: $\f_{4913}/\f_{17}$ ($q=17$, $m=3$)}}} \vspace{0.9mm} \\ 
\toprule
k & \# of $k$-normal elements & $\displaystyle \dfrac{\Phi_q(x^m-1)}{q^k}$  \\ \midrule
0   & 4608    & 4608 \\
 1  & 288  & 271.06 \\
 2& 16 & 15.94 \\ \bottomrule
\end{tabular} \\
\vspace{0.5mm}

\centering
\# of $(q-1)$-primitive normal \\ elements = 288
\end{minipage} \vspace{10mm}\quad \quad \vspace{10mm}
    &
\begin{minipage}[t][3cm]{.5\linewidth}
\begin{tabular}{@{}|c|c|c|@{}}
\multicolumn{3}{c}{{{\normalsize Table 8: $\f_{2401}/\f_{7}$ ($q=7$, $m=4$)}}} \vspace{0.9mm} \\ 
\toprule
k & \# of $k$-normal elements & $\displaystyle \dfrac{\Phi_q(x^m-1)}{q^k}$ \\ \midrule
0   & 1728    & 1728\\
 1  & 576  & 246.86 \\
 2& 84 & 35.26 \\
 3& 16 & 5.04 \\ \bottomrule
\end{tabular} \\
\vspace{0.5mm}

\centering
\# of $(q-1)$-primitive normal \\ elements =  112
\end{minipage} \\
\end{longtable}

\section{Conclusions and Open Problems}

In this paper, we dealt with the recently introduced concept of $k$-normal elements in finite fields \cite{huczynska2013existence}. The existence and cardinalities of $k$-normal elements in $\fqm$ are both strongly tied to the factorization of the polynomial $x^m-1$ over $\fq$, which, in turn, depends on the factorization of cyclotomic polynomials. One does not have an explicit formula for the irreducible factors of cyclotomic polynomials, or of their degrees, and so it is not possible to directly infer the existence or numbers of $k$-normal elements. However, one may deduce several key results by forcing certain conditions on $m$, $k$, and $q$. In Theorem \ref{existencecondition}, we used the general factorization of $x^m-1$ into cyclotomic polynomials to obtain a new existence condition for $k$-normal elements.

The structure of $\fqm$ as an additive module over $\fq[x]$ plays a key role in proofs related to normal and $k$-normal bases. In Theorem \ref{lowerbound}, we furnished a lower bound for the number of $k$-normal elements in $\fqm$ under the sole assumption that at least one of them exists. The proof is inspired by the observation in \cite{hyde2018normal} that the additive module structure of $\fqm$ in fact gives rise to a group action on all the normal elements. Our bound does not require a specific form for $m$ or $q$, and therefore extends beyond the formulas provided in \cite{saygi2019number}. Two interesting problems arise in this direction.

\begin{problem} Given a $k$-normal element $\alpha$, which subsets of $\{\alpha, \alpha^q, \alpha^{q^2}, \ldots, \alpha^{q^{m-1}}\}$ with size $m-k$ or smaller, apart from $\{\alpha, \alpha^q, \alpha^{q^2}, \ldots, \alpha^{q^{m-k-1}}\}$ are linearly independent? Computer experiments show that in many cases, there do exist linearly dependent subsets with size smaller than $m-k$. \end{problem}

\begin{problem} Given a $k$-normal element $\alpha$, does there exist another $k$-normal element outside $\spann_\fq\{\alpha, \alpha^q, \alpha^{q^2}, \ldots, \alpha^{q^{m-1}}\}$? We have noted in Remark \ref{remarkref} that Theorem \ref{number_knormal} proves that the number of $k$-normal elements in this subspace is larger than $\frac{\Phi_q(x^m-1)}{q^k}$. It would also be interesting to see whether a better bound for the total number can be obtained by bounding above the intersection of the $\fq$- spans of two distinct $k$-normal elements. 
 \end{problem}

 \begin{problem} Under what circumstances is the group action \eqref{gpaxnused} free? Under what circumstances is it transitive? \end{problem}


After the proof of the well-known Primitive Normal Basis Theorem by Lenstra and Schoof \cite{lenstra1987primitive}, several interesting generalizations have been proposed. The existence and numbers of elements with different pairs of additive orders (as in Definition \ref{order}) and multiplicative group orders have been investigated by several authors. Some solved and unsolved problems in this domain may be found in \cite{huczynska2013existence}, \cite{strongprim}, \cite{mullen2016some}, and \cite{kapetanakisvariations}. We state one such relevant open problem below.

\begin{problem}[{{\cite[Problem 6.4]{huczynska2013existence}}}]\label{probref1}
Determine the existence of high-order $k$-normal elements $\alpha \in \fqm$ over $\fq$, where “high order” means $ord(\alpha) = N$, with $N$ a large positive divisor of $q^m - 1$.
\end{problem}


With Theorem \ref{existencecondition2} we answered a special case of Problem \ref{probref1}. Following the method of Lenstra and Schoof \cite{lenstra1987primitive}, we provided an existence condition for elements in $\fqm$ with maximal additive order (i.e. normal elements) that simultaneously have a non-maximal but high multiplicative order, namely $\frac{q^m-1}{q-1}$.

\section*{Acknowledgement}{This work was partially supported by Swiss National Science Foundation grant no. 188430. The authors are also greatly thankful to Gianira Alfarano for her thorough proofreading and constructive feedback on this manuscript.}
%
%
\bibliography{waifi_ref}

\bibliographystyle{plain}   

\end{document}